\documentclass[12pt]{article}

\usepackage{amsmath}
\usepackage{amsfonts}
\usepackage{amsthm}
\usepackage{mathrsfs}
\usepackage[export]{adjustbox}
\usepackage{graphicx}
\graphicspath{ {./images/} }

\newtheorem{theorem}{Theorem}[section]

\newtheorem{definition}[theorem]{Definition}

\def\R{\mathbb R}

\def\<{\langle}
\def\>{\rangle}

\def\dist{{\rm dist}}

\newcommand{\romd}{\mathrm{d}}

\begin{document}
\title{Some comments on Laakso graphs and sets of differences}

\author{Alexandros Margaris and James C. Robinson\\
   Mathematics Institute\\ University of Warwick, Coventry\\ CV4 7AL, UK}

\maketitle

\begin{abstract}
We recall a variation of a construction due to Laakso \cite{LA}, also used by Lang and Plaut \cite{LA} of a doubling metric space $X$ that cannot be embedded into any Hilbert space. We give a more concrete version of this construction and motivated by the results of Olson \& Robinson \cite{OR}, we consider the Kuratowski embedding $\Phi(X)$ of $X$ into $L^{\infty}(X)$ and prove that $\Phi(X)-\Phi(X)$ is not doubling. 
\end{abstract}

\section{Introduction}

We say that a metric space $(X,d)$ is doubling, with doubling constant $K$, if for every given $x \in X$ and $r > 0$, there exist $y_{1},...,y_{K}$ in $X$ such that 
$$B(x,r) \subset \bigcup_{i=1}^{K} B(y_{i}, r/2).$$ We say that a metric space $(X,d)$ embeds into a normed space $\left(Y, \|\cdot\|\right)$ in a  bi-Lipschitz way if there exists $f \colon X \to Y$ and some constant $L >0$, such that for all $x,y \in X$
$$d(x,y) \frac{1}{L} \leq \|f(x) - f(y)\| \leq L \, d(x,y).$$
We know that when a metric space embeds into an Euclidean space in a bi-Lipschitz way then it must be doubling, but there are examples due to Laakso \cite{LA}, Lang \& Plaut \cite{LP} and Semmes \cite{SE} that show that this condition is not sufficient. 

In 1983, Assouad \cite{Ass} proved that any doubling metric space $X$ can be embedded into an Euclidean space in a bi--H\"older way, i.e. for any $0 <\alpha <1$, there exists some $k \in \mathbb{N}$ and a $\phi \colon X \to \mathbb{R}^{k}$ such that
$$\frac{1}{L} d(x,y)^{\alpha} \leq |\phi(x) - \phi(y)| \leq L d(x,y)^{\alpha},$$
for all $x,y \in X$.

We also want to recall the notion of a homogeneous metric space. A subset $A$ of a metric space $(X,d)$ is said to be $(M,s)-homogeneous$ if for every $x \in A$ and $r > \rho > 0$
$$N(A \cap B(x,r), \rho) \leq M\left(\frac{R}{r}\right)^{s},$$
where $N(A \cap B(x,r), \rho)$ denotes the minimum number of balls of radius $\rho$ required to cover $A \cap B(x,r)$.
It can be easily shown that a metric space is homogeneous if and only if it is doubling (see Chapter 9 in the book of Robinson \cite{JCR}). In 2010, Olson and Robinson \cite{OR} used Assouad's construction \cite{Ass} and proved that if $X$ is homogeneous then it can be also embedded in an almost bi--Lipschitz way into a Hilbert space $H$, i.e. for any $\gamma >\frac{1}{2}$, there exists a map $f \colon (X,d) \to H$ such that for some positive constant $L$
$$\frac{1}{L} \frac{d(x,y)}{|\log d(x,y)|^{\gamma}} \leq \|f(x) - f(y)\| \leq L \, d(x,y).$$ Moreover, in 2014, Robinson \cite{JCR1} generalised the above result and and proved the following embedding theorem for subsets of Banach spaces.
\begin{theorem}\label{ET1}
Suppose $X$ is a compact subset of a real Banach space $\mathfrak{B}$ such that the set $X-X = \{x-y : x,y \, \in \, X\}$ is homogeneous. Then for any $\gamma >1$, there exists a natural number $N$ and a dense set of linear maps $L \colon \mathfrak{B} \to \mathbb{R}^{N}$, that are injective on $X$ and $\gamma$-almost bi--Lipschitz, i.e. for some constant $c_{L}$
$$\frac{1}{c_{L}} \frac{\|x-y\|}{|\log \|x-y\||^{\gamma}} \leq |L(x) - L(y)| \leq c_{L} \, \|x-y\|,$$
for all $x,y \in X$.
\end{theorem}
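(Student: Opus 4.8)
The plan is to prove the two inequalities by quite different means. The upper bound requires no genericity at all: every bounded linear map $L\colon\mathfrak B\to\mathbb R^N$ satisfies $|L(x)-L(y)|\le\|L\|_{\op}\|x-y\|$, so the right-hand estimate holds for \emph{any} such $L$ with $c_L=\|L\|_{\op}$. All the work goes into the lower bound, which I would establish for a prevalent (hence dense) set of maps by a probabilistic transversality argument in the spirit of Hunt--Sauer--Yorke and Olson--Robinson. Writing $Z=X-X$, the lower bound together with injectivity amounts to showing $|Lz|\ge c_L^{-1}\|z\|\,|\log\|z\||^{-\gamma}$ for every nonzero $z\in Z$, and since $Z$ is compact it is enough to control small $z$: the finitely many large scales are handled by the same estimate with a fixed threshold and absorbed into $c_L$. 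Note that $\mathcal L(\mathfrak B,\mathbb R^N)$ is a Banach space, so the notion of prevalence applies.

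To build the candidate maps I would fix a sequence $(\phi_j)_{j\ge1}\subset\mathfrak B^{*}$ of norm-one functionals separating the points of $Z$, together with a summable weight sequence $w_j>0$, and consider random maps $L^\omega z=\bigl(\ell_1(z),\dots,\ell_N(z)\bigr)$ whose components $\ell_i=\sum_j a_{ij}w_j\phi_j$ have coefficients $a_{ij}$ drawn independently from a fixed compactly supported law, say uniform on $[-1,1]$. Summability of $(w_j)$ guarantees that each $\ell_i$ is bounded, so $L^\omega$ is automatically admissible for the upper bound, and the induced probability measure $\mu$ on $\mathcal L(\mathfrak B,\mathbb R^N)$ serves as the probe. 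The basic input is a one-direction nondegeneracy estimate: for fixed nonzero $z$, a bounded-density argument gives $\mu\bigl(\{|L^\omega z|\le t\}\bigr)\le (Ct/\sigma(z))^N$, where $\sigma(z)$ measures how strongly the weighted functionals detect the direction $z$. Homogeneity enters precisely in bounding $\sigma(z)$ from below: at scale $\|z\|\approx 2^{-k}$ one uses the covering estimate for $Z$ to argue that only the first $J_k$ functionals are needed to resolve such directions, so that $\sigma(z)\gtrsim w_{J_k}\|z\|$.

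I would then run a Borel--Cantelli argument over dyadic scales. For each $k$ define the bad event that some $z\in Z$ with $2^{-k-1}\le\|z\|\le2^{-k}$ has $|L^\omega z|\le t_k\|z\|$, with $t_k\sim k^{-\gamma}$ so that $t_k\sim|\log\|z\||^{-\gamma}$ at this scale. Covering this annular piece of $Z$ by a relative $\rho$-net of cardinality $\lesssim (2^{-k}/\rho)^s$, where $s$ is the homogeneity exponent, and using the Lipschitz continuity of $L^\omega$ to pass from net points to all $z$ (taking $\rho$ comparable to $t_k2^{-k}$), a union bound yields $\mu(\text{bad}_k)\lesssim (2^{-k}/\rho)^s (Ct_k/\sigma_k)^N$. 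If this is summable, almost every $L^\omega$ lies in only finitely many bad events, which is exactly the desired lower bound at all small scales. Since a prevalent set is dense, this simultaneously delivers the dense family of maps claimed in the statement.

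The main obstacle is the interplay between three competing exponents. Summability of $(w_j)$ forces $w_{J_k}\to0$, which weakens $\sigma_k$ and hence the lower bound, while the homogeneity exponent $s$ drives up both the net cardinality $(2^{-k}/\rho)^s$ and the growth of $J_k$; against these one has only the factor $(t_k/\sigma_k)^N$, improvable by enlarging $N$. Forcing the series $\sum_k(2^{-k}/\rho)^s(t_k/\sigma_k)^N$ to converge while letting $t_k$ decay no faster than a fixed power of $|\log\|z\||$ is the delicate optimization, and it is here that the threshold $\gamma>1$ (rather than the $\gamma>\tfrac12$ available for Hilbert targets) appears: the finite-dimensionality of $\mathbb R^N$, together with the summable weights needed for boundedness, costs an extra power of the logarithm compared with the square-summable weights a Hilbert space permits. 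Choosing the weight decay and the growth of $J_k$ in terms of $s$, and only then taking $N$ large enough, is the step I expect to demand the most care.
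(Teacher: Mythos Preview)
There is nothing in the paper to compare your proposal against: Theorem~\ref{ET1} is not proved in this paper. It is quoted from Robinson~\cite{JCR1} as background and motivation for the main result (Theorem~\ref{DG}); the paper supplies no argument for it whatsoever.

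For what it is worth, your outline is the right one and tracks the approach of the cited source, which in turn builds on Hunt--Kaloshin and Olson--Robinson~\cite{OR}: construct a probe measure on $\mathcal L(\mathfrak B,\mathbb R^N)$ from random combinations of separating functionals, derive a single-direction smallness estimate of the form $\mu(\{|Lz|\le t\})\le (Ct/\sigma(z))^N$, cover dyadic shells of $Z=X-X$ by nets whose cardinality is controlled by homogeneity, and run Borel--Cantelli with $t_k\sim k^{-\gamma}$. You have also correctly located where the threshold $\gamma>1$ comes from and why it is weaker than the Hilbert-space exponent. The one place where your sketch is vaguer than the actual proof is the lower bound on $\sigma(z)$: in practice one does not merely say ``only the first $J_k$ functionals are needed,'' but chooses the functionals so that at each dyadic scale a specific functional achieves a definite fraction of $\|z\|$ on a net of that scale, and this is what forces the particular growth of $J_k$ (polynomial in $k$) against which the weights and $N$ must then be tuned. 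That bookkeeping is routine once set up, but it is where the constants and the precise form of $\gamma>1$ are pinned down.
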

The above theorem can be used to provide embeddings of subsets of compact metric spaces, using the isometric embedding $\Phi \colon (X,d) \to L^{\infty}(X)$, given by $x \mapsto d(x, \cdot),$ due to Kuratowski (see the notes from Heinonen \cite{HJ} for a detailed proof). In particular, we can define $`X-X$' in this context to mean
$$X-X \equiv \Phi(X)-\Phi(X) =  \{f \in L^{\infty}(X) : f = d(x,\cdot) - d(y,\cdot), \, \, \text{for} \, \, x, y \, \, \text{in} \, \, X\}.$$

It is an open problem whether we can prove almost bi--Lipschitz embeddings into Euclidean spaces under a weaker condition than the one in Theorem \ref{ET1}. It is also a question whether we can prove `better' embeddings than almost bi--Lipschitz when $X-X$ is homogeneous. There are known examples of homogeneous subsets of Banach spaces for which the set of differences is not homogeneous (see for example Chapter 9 in the book of Robinson \cite{JCR}) but there is no information on the embedding properties of these sets.

In this paper, we consider a variation of the construction due to Laakso \cite{LA}, which was used by Lang \& Plaut \cite{LP} to construct a doubling metric space $X$ that cannot be embedded in a bi--Lipschitz way into any Hilbert space. We prove that $\Phi(X) - \Phi(X)$ is not doubling as a subset of $L^{\infty}(X)$, thus giving motivation towards the direction of studying the set of differences more closely. In this way, we also rule out the possibility of using Theorem \ref{ET1} to prove the existence of almost bi--Lipschitz maps into a Euclidean space for this set.

\section{The Laakso graphs}

We first recall the definition of the Gromov--Hausdorff distance of compact metric spaces.

\begin{definition}
Suppose $X,Y$ are compact metric spaces. Then
$$d_{GH}(X,Y) = \inf \max\{\dist(f(X),g(Y), \dist(g(Y),f(X))\},$$
where the infimum is taken over all metric spaces $M$ and all possible isometric embeddings $f \colon X \to M$ and $g \colon Y \to M$.
\end{definition}

It is easy to check that $d_{GH}(X,Y)=0$ if and only if $X$ is isometric to $Y$ (see the lecture notes from Heinonen \cite{HJ} for a proof), proving that the set of all isometry classes of compact metric spaces equipped with $d_{GH}$ forms a metric space, which is compact (see Heinonen \cite{HJ} again).

We now recall the construction due to Lang and Plaut \cite{LP} of a metric space that is homogeneous but does not embed in a bi--Lipschitz way into any Hilbert space. Here we use a construction that is somewhat more concrete than that of Lang and Plaut \cite{LP}. We define the limiting metric space explicitly and then prove that it coincides with the one defined by Lang and Plaut.

Let $X_{0}$ be a single edge of length $1$. To construct $X_{i+1}$ from $X_{i}$, we take six copies of $X_{i}$ and rescale them by the factor of 
$\frac{1}{4}$ as in the following Figure.

\begin{figure}[h]\label{F1}
\begin{center}
\includegraphics[scale=0.8]{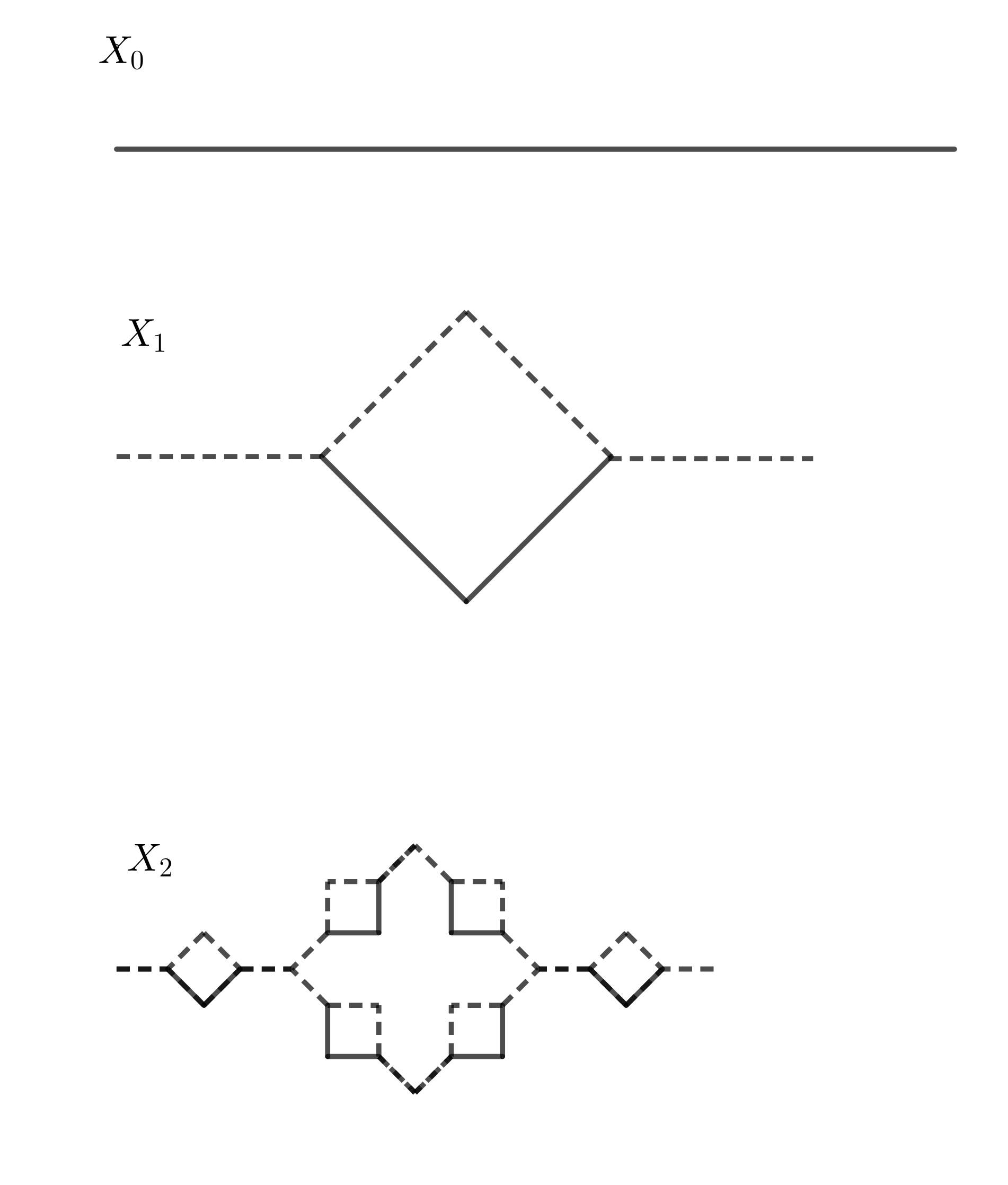}
\caption{The first stages of the construction. At each step $i$ the dotted subset is isomorphic to $X_{i-1}$.}
\end{center}
\end{figure}

We note that each $X_{i}$ has diameter $1$, has two endpoints, and comprises of $6^{i}$ edges of length $4^{-i}$ each. Every $X_{i}$, for $i > 0$ also includes $6^{i-1}$ `squares', which we call `edge cycles' for the rest of the paper. We define a metric $\varrho_{i}(x,y)$ on each of the $X_{i}$ to be the geodesic distance, i.e. the shortest path that we need to travel on the graph to get from $x$ to $y$. For any $j > i$, we construct an isometric embedding of $X_{i}$ into $X_{j}$, by identifying vertices in $X_{i}$ with vertices in $X_{j}$ and endpoints with endpoints. The image of $X_{i}$ into $X_{j}$ is represented with the dotted lines in the above figure. It is also easy to see that $d_{GH}(X_{i},X_{j}) < \left(\frac{1}{4}\right)^{i}$, and so $\{\left(X_{i}, \varrho_{i}\right)\}_{i=1}^{\infty}$ forms a Cauchy sequence in the Gromov--Hausdorff metric and it follows that it converges to a limiting metric space $X$, which is used by Lang and Plaut in their argument. We now construct this space $X$ explicitly using the following procedure

Let $(X_{i}, \varrho_{i})$ be as above for any $j >i$ let $h_{i \rightarrow j}$ denote an isometric embedding of $X_{i}$ into $X_{j}$.
Then, we take $X^{*} = \cup_{i=1}^{\infty} X_{i}$ and define a pseudometric on $X^{*}$, by setting

$$\varrho^{*}(x,y) = \begin{cases} 
\varrho_{i}(x,y) & \text{if } x,y \in X_{i}.\\
\varrho_{j}(h_{i \rightarrow j}(x),y) & \text{if } x \in X_{i} , y \in X_{j} \, \, \,  \text{and } i <j. 
\end{cases}$$

We now define a new metric space $X$, by identifying points in $X_{i}$ with their respective images in all $X_{j}$ for $j > i$. For all $x, y \in X^{*}$, we define the following equivalence relation

$$x\sim y \Leftrightarrow \varrho^{*}(x,y) = 0,$$ and we set $X = \cup_{i=1}^{\infty}[X_{i}].$
Then, for any $[x], [y] \in X$, we define
$$\varrho([x],[y]) = \varrho^{*}(x,y).$$

This definition of $X$ does not depend on the embedding we choose at each step, since if we consider another we end up with an isometric metric space.

Using the above construction, it is easy to check that
$$\romd_{GH}(X,X_{i})  \rightarrow 0.$$ 
Indeed, let $ \pi \colon X_{i} \to X$ be such that for any $x \in X_{i}$, 
$$\pi(x) = [x].$$ It is immediate that $\pi$ is an isometry from $X_{i}$ onto $[X_{i}]$.
Therefore,
$$\romd_{GH}(X,X_{i}) = \romd_{GH}(X,[X_{i}]) \leq \romd_{X}(X, [X_{i}]).$$
Let $x \in X\setminus [X_{i}]$. Then, there exists $k>i$ such that $x \in [X_{k}]$.
Then, 
$$\romd_{X}(x,[X_{i}]) = \romd_{X_{k}}(x,h_{i\rightarrow k}(X_{i})) \leq \left(\frac{1}{4}\right)^{i+1} \xrightarrow{i \rightarrow \infty} 0,$$
which proves that $X$ coincides with the metric space defined by Lang and Plaut. For the rest of the argument when we mention a point $x \in X_{i}$ we refer to the class $[x]$ with respect to the above equivalence relation.

Lang \& Plaut \cite{LP} showed that $X$ is doubling with doubling constant $6$. Using the above construction, their proof becomes somewhat more transparent. For a proof see \cite{MAR}.

Now, we recall the Kuratowski embedding  
\begin{align}\label{KUR}
\Phi \colon & X \to L^{\infty}(X)\\\nonumber
& x \mapsto \varrho (\cdot, x).
\end{align}
and we define

$$X-X = \{\varrho(\cdot,x) - \varrho(\cdot,y) : x,y \in X\}.$$

We now prove that $X-X$ is not doubling.

\begin{theorem}\label{DG}
If $X$ is the metric space defined above and $\Phi \colon X \to L^{\infty}(X)$ is the Kuratowski embedding defined in \eqref{KUR} then, $\Phi(X)-\Phi(X)$ is not doubling.
\end{theorem}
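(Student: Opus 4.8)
The plan is to contradict the packing form of the doubling property. Doubling with some constant $K$ forces every ball of radius $R$ to contain at most $K^{2}$ points that are pairwise farther apart than $R/2$: two applications of the definition cover $B(x,R)$ by $K^{2}$ balls of radius $R/4$, each of diameter at most $R/2$, so each can contain at most one point of an $R/2$-separated set. I will therefore exhibit, at every level $i$, a ball in $\Phi(X)-\Phi(X)$ containing $6^{i-1}$ points that are pairwise separated by more than half the radius; letting $i\to\infty$ then rules out any finite doubling constant.

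The building blocks are the edge cycles. Fix a level $i$ and let $D_{1},\dots,D_{M}$ with $M=6^{i-1}$ be the smallest diamonds in $X$, each made of two arcs of length $2\cdot 4^{-i}$ joining a left corner to a right corner through a midpoint. For the $j$-th diamond write $T_{j},S_{j}$ for its two midpoints, so that $\varrho(T_{j},S_{j})=2\cdot 4^{-i}=:r$, and set
$$g_{j}=\varrho(\cdot,T_{j})-\varrho(\cdot,S_{j})\in \Phi(X)-\Phi(X).$$
Evaluating at $T_{j}$ and $S_{j}$ and using the triangle inequality gives $\|g_{j}\|_{\infty}=r$, so each $g_{j}$ lies in the open ball $B(0,\tfrac{3}{2}r)$ about the origin $0=\varrho(\cdot,x)-\varrho(\cdot,x)$ of the difference set.

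The heart of the argument is a cut property of the diamonds: any $z$ not lying inside $D_{k}$ can only reach the interior of $D_{k}$ through one of its two corners, and each corner is equidistant (distance $4^{-i}$) from both midpoints $T_{k}$ and $S_{k}$. Hence $\varrho(z,T_{k})=\varrho(z,S_{k})$ for every such $z$, so $g_{k}$ vanishes off $D_{k}$, and in particular vanishes at the corners. Consequently, for $j\neq k$ the functions $g_{j}$ and $g_{k}$ have essentially disjoint supports — they can both be nonzero only inside a shared diamond, which does not occur — so
$$\|g_{j}-g_{k}\|_{\infty}=\max\{\|g_{j}\|_{\infty},\|g_{k}\|_{\infty}\}=r\qquad(j\neq k).$$

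This produces $M=6^{i-1}$ points $g_{1},\dots,g_{M}$ inside $B(0,\tfrac{3}{2}r)$ that are pairwise at distance $r>\tfrac{1}{2}\cdot\tfrac{3}{2}r$; since $M\to\infty$ as $i\to\infty$, no finite doubling constant can survive, and $\Phi(X)-\Phi(X)$ is not doubling. I expect the cut property and the bookkeeping around it to require the most care: one must justify, from the inductive construction of the $X_{i}$ and the passage to the limit $X$, that geodesics entering a smallest diamond pass through a corner and that the two midpoints are genuinely equidistant from each corner, and one must handle the degenerate case where two smallest diamonds share a corner vertex — there both $g_{j}$ and $g_{k}$ vanish, so neither the support-disjointness nor the value $\|g_{j}-g_{k}\|_{\infty}=r$ is affected. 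Checking $\|g_{j}\|_{\infty}=r$ and placing everything at the scale $r=2\cdot 4^{-i}$ in the limit space is then routine.
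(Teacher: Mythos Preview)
Your argument is correct and takes a route different from the paper's. The paper argues by \emph{covering}: with $r=4^{-i}$ it assumes $B_{X-X}(0,2r)$ is covered by $M$ balls of radius $r$ centred at arbitrary $g_{j}=\varrho(\cdot,t_{j})-\varrho(\cdot,s_{j})$, and then, by choosing test points $x,y,z$ inside a given edge cycle, shows that every one of the $6^{i-1}$ cycles must contain one of the $t_{j}$ or $s_{j}$, forcing $M\ge 6^{i-1}$. You argue by \emph{packing}: you explicitly build $6^{i-1}$ elements of the difference set --- the functions $g_{j}$ attached to the two midpoints of each cycle --- and use the cut property of the diamond corners to show these functions have disjoint supports, hence are pairwise exactly $r$-separated inside a ball of radius $\tfrac{3}{2}r$. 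Your construction is more concrete and sidesteps the case analysis and the figure-dependent reasoning of the paper's proof; the trade-off is that you must verify the cut property carefully in the limit space $X$ (not just in $X_{i}$), which you have correctly flagged as the main point to check. Both arguments extract the same combinatorial fact --- the $6^{i-1}$ cycles at level $i$ act independently in $L^{\infty}(X)$ --- but yours makes this independence explicit through support disjointness rather than implicit through a covering obstruction.
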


\begin{proof}
We assume that $\Phi(X) - \Phi(X)$ is doubling. 

Let $r = \left(\frac{1}{4}\right)^{i}$, for some $i \in \mathbb{N}$ and take the ball $B_{X-X} \left(0, 2r\right)$. Suppose that there exist $M$ and $g_{j} \in X-X$ that satisfy
$$B_{X-X} \left(0, 2r\right) \subset \cup_{j=1}^{M} B_{X-X} (g_{j}, r).$$ 
Now, let $f \in B_{X-X} \left(0, 2r\right)$. Then, there exist $x,y \in X$ such that

$$f(z) = \varrho(x,z) - \varrho(y,z), \, \, \forall z \in X.$$
We can easily check that $\|f\|_{\infty} = \varrho(x,y) < 2r.$
Similarly, let $t_{j}, s_{j} \in X$ such that
$$g_{j}(z) = \varrho(t_{j},z) - \varrho(s_{j},z), \, \, \forall z \in X.$$

Any time we choose $x,y \in X$ such that $\varrho(x,y) < 2r$, we obtain an element of $B_{X-X} (0,2r).$
Let $[X_{k}] \subset X$ such that 
$$t_{j}, s_{j} \in [X_{k}],$$ for all $j \leq M$.
We now have two cases

If $k \leq i$, we show that for any edge cycle in $X_{i}$, there exist copies of some $t_{j}$ or $s_{j}$ that belong to this edge cycle. 
Suppose that there exist a cycle in $X_{i}$ that does not contain any images of $t_{j},s_{j}$. Then, we choose $x,y \in X_{i}$ as in the Figure \ref{F3}, where we zoom in at that specific cycle. Then, $x,y \in X$ satisfy 
\begin{align*}
& \varrho(t_{j},x) > r, \, \forall j \leq M \\
& \varrho(s_{j},x) > r, \, \forall j \leq M \\
& \varrho(s_{j},y) > 0, \, \forall j \leq M\\
& r < \varrho(x,y) < 2r.
\end{align*}

\begin{figure}[h]\label{F3}
\begin{center}
\includegraphics[scale= 0.7]{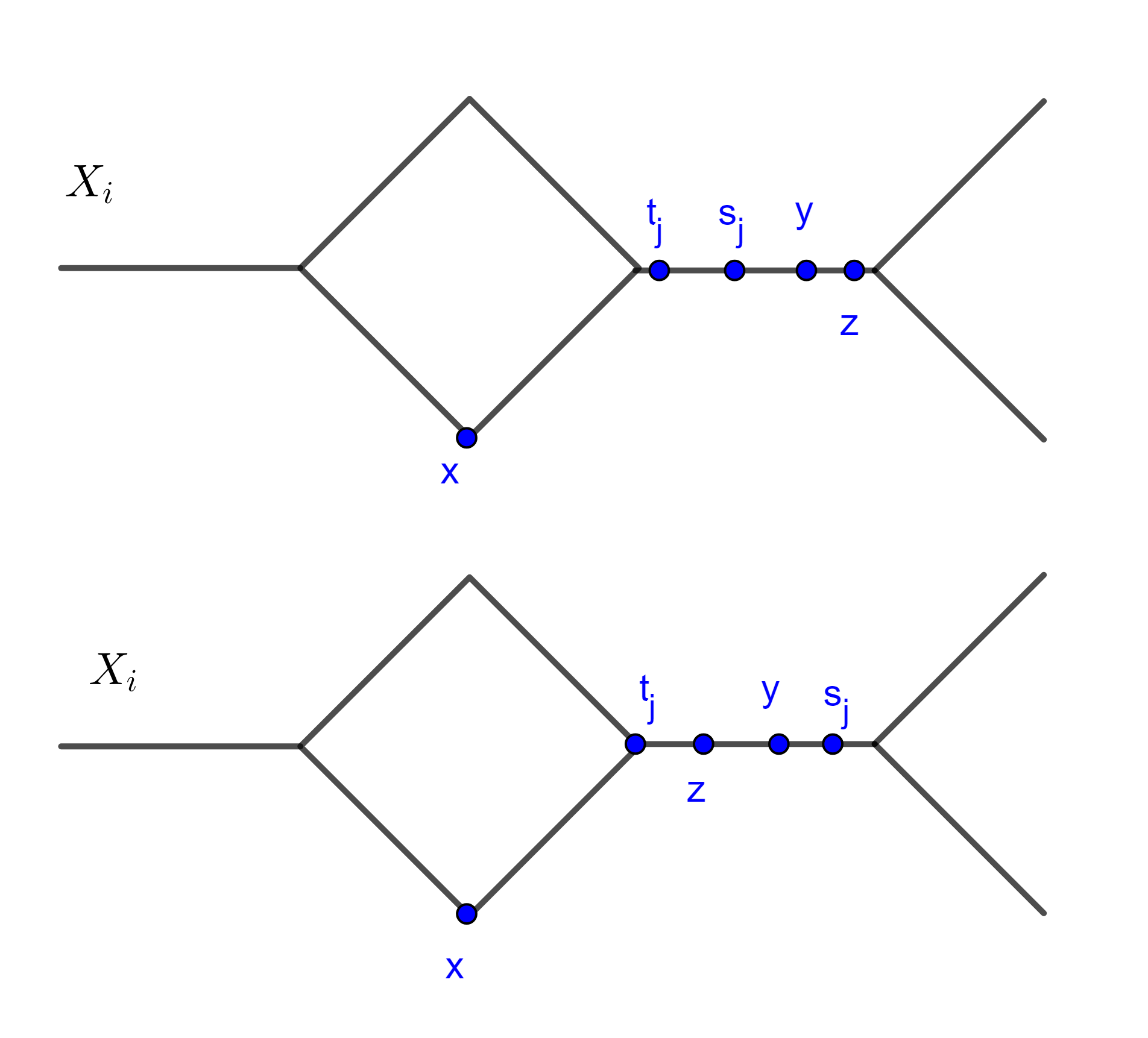}
\end{center}
\caption{The edge cycle in $X_{i}$, which does not contain any $t_{j}, s_{j}.$}
\end{figure}

Since $f \in B_{X-X}(0,2r)$, there exist $j \leq M$ such that
$$\|f-g_{j}\|_{\infty} < r \Leftrightarrow \|\varrho(x,z) - \varrho(y,z) - \varrho(t_{j},z) + \varrho(s_{j},z)\|_{\infty} < r, $$
for some $j \leq M$.
Choosing $z$ as in the above figure, depending on the position of $t_{j}, s_{j}$ we have that 
$$\|\varrho(x,z) - \varrho(y,z) - \varrho(t_{j},z) + \varrho(s_{j},z)\|_{\infty} = \varrho(t_{j},x) + \varrho(s_{j},y) >r$$ or
$$\|\varrho(x,z) - \varrho(y,z) - \varrho(t_{j},z) + \varrho(s_{j},z)\|_{\infty} = \varrho(x,y) + \varrho(s_{j},t_{j}) > r,$$ 
a contradiction.
We conclude that any edge cycle in $X_{i}$ contains one of the $t_{i}, s_{i}$ and since there are $6^{i-1}$ edge cycles contained in $X_{i}$, we deduce that 
$$M \geq 6^{i-1},$$ a contradiction.

If $k > i$, we consider the endpoints $v_{ij},u_{ij}$ that enclose an edge cycle in $X_{i}$. We rescale the cycle by the appropriate factor to create an edge cycle in $X_{k}$, with the same endpoints in $X_{k}$ (with respect to the equivalence relation we have). Since distances are preserved, we only need to repeat the above argument for all these cycles in $X_{k}$ (see also the following figure).

\begin{figure}[h]
\begin{center}
\includegraphics[scale=0.8]{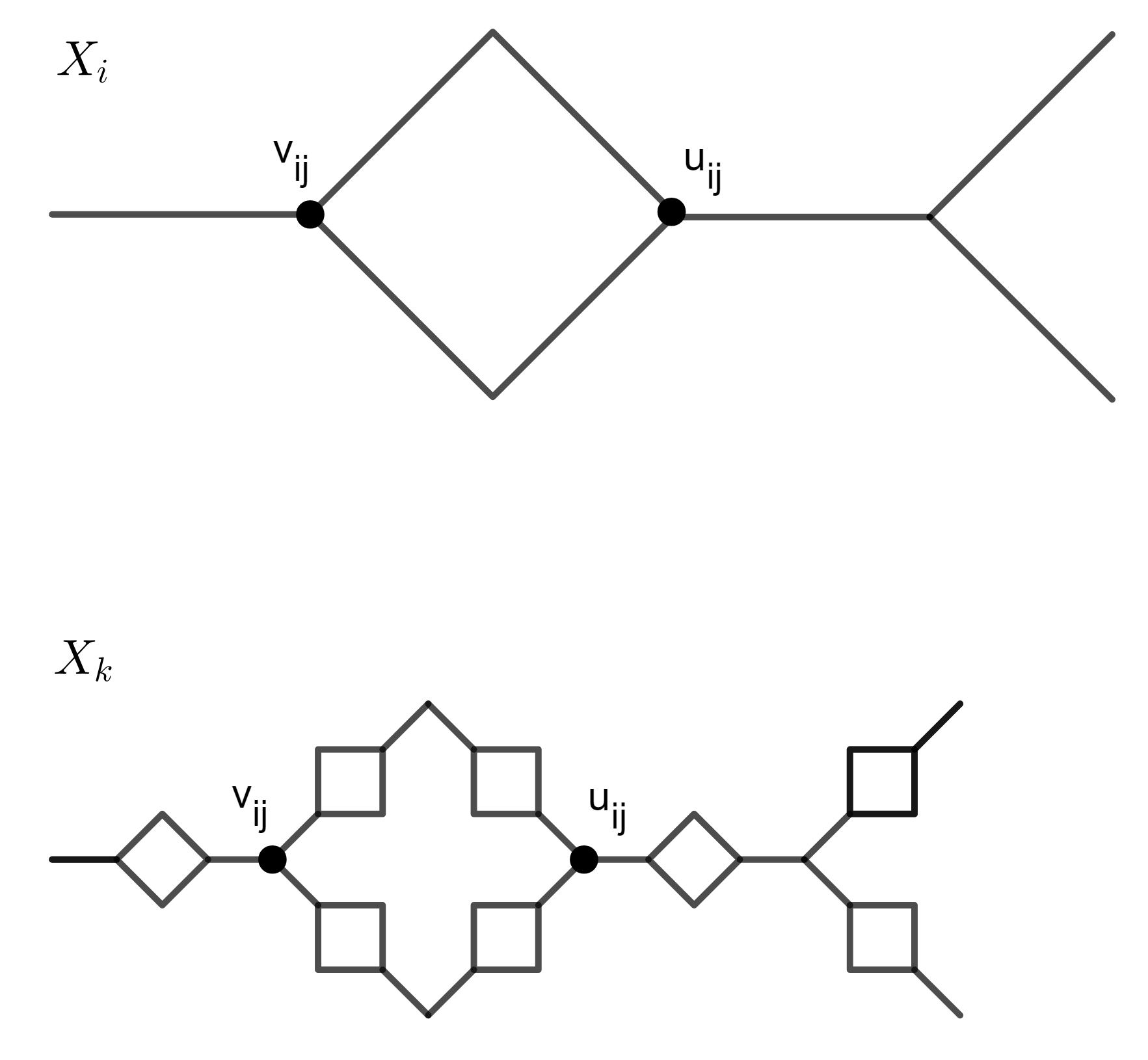}
\end{center}
\caption{The case $k > i$.}
\end{figure}

\end{proof}

\section{Conclusion}
The above result gives us an indication that we might expect better embedding properties, if we impose some condition on the set of differences. Moreover, following the results due to Olson and Robinson \cite{OR}, which are mentioned in the introduction, we arrive to the following open problems
\begin{enumerate}
\item If $X$ is as in Theorem \ref{DG}, is there an almost bi--Lipschitz embedding $f \colon X \to \mathbb{R}^{k}$?
\item Does every doubling metric space admit an almost bi--Lipschitz embedding into an Euclidean space? (A positive answer would yield a positive answer to the first question.)
\item If the set of differences $X-X$ is homogeneous, can we obtain `better' embedding properties than almost bi--Lipschitz?
\end{enumerate}


\begin{thebibliography}{99}
\bibitem{Ass} Assouad, P. `Plongements Lipschitziens dans
 $\R^n$', {\em Bull. Soc. Math. France} 111, 429-448 (1983).
\bibitem{HJ} Heinonen, J. `Geometric Embeddings of Metric Spaces.' Lectures in the Finnish Graduate School of Mathematics, University of Jyvaskyla (2003).
\bibitem{LA} Laakso, T.J., 2002. Plane with $ A_{\infty}$-weighted metric not Bi-Lipschitz embeddable to ${\mathbb{R}}^ n$. Bulletin of the London Mathematical Society, 34(6), pp.667-676.
\bibitem{LP} Lang, U. and Plaut, C., 2001. Bilipschitz embeddings of metric spaces into space forms. Geometriae Dedicata, 87(1-3), pp.285-307.
\bibitem{MAR} Margaris, A. Phd Thesis, University of Warwick, Department of Mathematics, 2019.
\bibitem{OR} Olson, E. and Robinson, J. C., 2010. Almost bi-Lipschitz embeddings and almost homogeneous sets. Transactions of the American Mathematical Society, 362(1), pp.145-168.
\bibitem{JCR1} Robinson, J. C. Log-Lipschitz embeddings of homogeneous sets with sharp logarithmic exponents and slicing products of balls. Proceedings of the American Mathematical Society. 2014;142(4):1275-88.
\bibitem{JCR} Robinson, J. C. `Dimensions, Embeddings and Attractors", Cambridge Tracts in Mathematics, Vol. 186 (2011).
\bibitem{SE} Semmes, S. (1996). On the nonexistence of bilipschitz parameterizations and geometric problems about $ A_\infty $-weights. Revista Matemática Iberoamericana, 12(2), 337-410.
\end{thebibliography}
\end{document}